\newtheorem*{rep@theorem}{\rep@title}
\newcommand{\newreptheorem}[2]{%
\newenvironment{rep#1}[1]{%
\def\rep@title{#2 \ref{##1}}%
\begin{rep@theorem}}%
{\end{rep@theorem}}
}
\title{Minimal covers in the Weihrauch degrees}
\date{}
\author{Steffen Lempp}
\author{Joseph S.\ Miller}
\author{Arno Pauly}
\author{Mariya I.\ Soskova}
\author{Manlio Valenti}
\address[Steffen Lempp]{Department of Mathematics\\
University of Wisconsin - Madison\\
Madison, Wisconsin 53706\\
USA}
\email{\href{mailto:lempp@math.wisc.edu}{lempp@math.wisc.edu}}
\address[Joseph S.\ Miller]{Department of Mathematics\\
University of Wisconsin - Madison\\
Madison, Wisconsin 53706\\
USA}
\email{\href{mailto:jmiller@math.wisc.edu}{jmiller@math.wisc.edu}}
\address[Arno Pauly]{Department of Computer Science\\
Swansea University\\
Swan\-sea, SA1 8EN\\
UK}
\email{\href{mailto:arno.m.pauly@gmail.com}{arno.m.pauly@gmail.com}}
\address[Mariya I.\ Soskova]{Department of Mathematics\\
University of Wisconsin - Madison\\
Madison, Wisconsin 53706\\
USA}
\email{\href{mailto:msoskova@math.wisc.edu}{msoskova@math.wisc.edu}}
\address[Manlio Valenti]{Department of Mathematics\\
University of Wisconsin - Madison\\
Madison, Wisconsin 53706\\ 
USA}
\curraddr{Department of Computer Science\\
Swansea University\\
Swan\-sea, SA1 8EN\\
UK}
\email{\href{mailto:manliovalenti@gmail.com}{manliovalenti@gmail.com}}
\thanks{The first author was partially by AMS-Simons Foundation Collaboration Grant 626304. The second and fourth authors were partially supported by NSF Grant No.\ DMS-2053848. \\
\noindent\begin{minipage}{0.1\textwidth}\includegraphics[width=\textwidth]{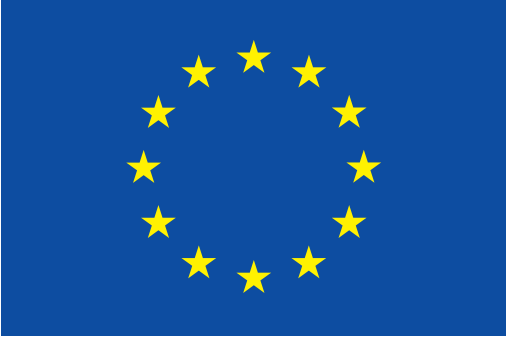}\end{minipage} \begin{minipage}{0.9\textwidth}This project has received funding from the European Unions Horizon 2020 research and innovation programme under the Marie Sklodowska-Curie grant agreement No 731143, \emph{Computing with Infinite Data}.\end{minipage}
}
\subjclass{Primary 03D30; Secondary 03D78}
\keywords{Weihrauch degrees, Medvedev degrees, minimal covers, first-order definability}
\begin{document}

\maketitle

\begin{abstract}
In this paper, we study the existence of minimal covers and strong minimal covers in the Weihrauch degrees. We characterize when a
problem~$f$ is a minimal cover or strong minimal cover of a problem~$h$. We show that strong minimal covers only exist in the cone below~$\id$ and that
the Weihrauch lattice above~$\id$ is dense. From this, we conclude that the degree of $\id$ is first-order definable in the Weihrauch degrees and that the first-order theory of the Weihrauch degrees is computably isomorphic to third-order arithmetic.
\end{abstract}

\section{Introduction}

In a partial order $(\mathcal{D},\le)$, an element $\mathbf{a}$ is a
\textdef{minimal cover} of $\mathbf{b}$ if $\mathbf{b}<\mathbf{a}$ and there
is no $\mathbf{c}$ such that $\mathbf{b}<\mathbf{c}<\mathbf{a}$. In other
words, the interval between $\mathbf{a}$ and $\mathbf{b}$ is empty. We say
that $\mathbf{a}$ is a \textdef{strong minimal cover} of $\mathbf{b}$ if
$\mathbf{b}<\mathbf{a}$ and for all $\mathbf{c}$ if $\mathbf{c}<\mathbf{a}$
then $\mathbf{c}\leq \mathbf{b}$.

Understanding the properties and the distribution of minimal covers and
strong minimal covers can provide deep insights into the structure of a
partial order, and indeed there is an extensive literature on the
construction of minimal covers and strong minimal covers in the Turing
degrees. In contrast, despite its growing popularity, the structure of
Weihrauch degrees is vastly unexplored, as most of the efforts up to this
date have concentrated on the classification of the Weihrauch degrees of
specific problems. In particular, very little is known on the existence of
(strong) minimal covers in the Weihrauch degrees. In this paper, we fill this
gap by providing complete characterizations of minimal covers and strong
minimal covers in the Weihrauch degrees. We will see that this
analysis is then able to answer a number of other questions.

\subsection{Background}
Weihrauch reducibility \cite{Wei92,GM09,BGP17} classifies partial
multi-valued functions according to their uniform computational strength, and
it is often used to characterize the computability-theoretical complexity
of $\forall\exists$-statements. We briefly recall the main notions we need in this paper, and we refer the
reader to~\cite{BGP17} for a more thorough presentation on Weihrauch
reducibility. 

If $f$ and $g$ are partial multi-valued functions on Baire space (denoted
by $f,g\pmfunction{\Baire}{\Baire}$), we say that $f$ is \textdef{Weihrauch
reducible} to $g$, and write $f\weireducible g$, if there are two computable
functionals $\Phi\pfunction{\Baire}{\Baire}$ and $\Psi\pfunction{\Baire\times
\Baire}{\Baire}$ such that
\[ (\forall p\in \dom(f))[\; \Phi(p)\in \dom(g) \land (\forall q\in
g(\Phi(p)))\; \Psi(p,q)\in f(p) \;]. \] We use $(\weidegrees, \weireducible)$ to
denote the structure of degrees induced by Weihrauch reducibility. In the
computable analysis literature, Weihrauch reducibility is often defined in a
more general context, where $f$ and $g$ are partial multi-valued functions
between represented spaces. However, it is well-known that every Weihrauch
degree contains a representative with domain and codomain $\Baire$ (see, e.g., 
\cite[Lemma 11.3.8]{BGP17}). In other words, in order to study the structure
of the Weihrauch degrees, there is no loss of generality in restricting our
attention to computational problems on Baire space. In what follows, with a small abuse of notation we will identify a natural number $n$ with the infinite string constantly equal to $n$. 

The Weihrauch degrees are known to form a distributive lattice where join
and meet are induced, respectively, by the following operators:
\begin{itemize}
	\item $f\sqcup g$ is the problem with domain $\{0\}\times \dom(f)
	\cup \{1\}\times \dom(g)$ defined as
	\[ (f\sqcup g)(i,x):=\begin{cases} f(x) & \text{if }i = 0,\\ g(x) &
		\text{if }i=1.\end{cases}\]
	\item $f\sqcap g$ is the problem with domain $\dom(f) \times \dom(g)$
	defined as
	\[ (f\sqcap g)(x,z):= \{0\} \times f(x) \cup \{1\}\times g(z).\]
\end{itemize}
The degree of the empty function is a natural bottom element in the
Weihrauch degrees. The existence of a top element is equivalent to the
failure of a (relatively weak) form of choice. In particular, under
$\mathrm{ZFC}$, there is no top element in $(\weidegrees, \weireducible)$.

The statements and proofs of our main theorems exploit the interplay between
Weihrauch and Medvedev reducibility. For the sake of completeness, we recall
some basic facts on Medvedev reducibility and refer the reader to
\cite{Hinman2012,Sorbi1996} for more details. Given $A,B\subseteq \Baire$, we
say that $A$ is \textdef{Medvedev reducible} to $B$, and write
$A\medvedevreducible B$, if there is a computable functional
$\Phi\pfunction{\Baire}{\Baire}$ such that $\Phi(B)\subseteq A$. We write
$(\medvedevdegrees, \medvedevreducible)$ for the degree structure induced by
Medvedev reducibility. It is well-known that the Medvedev degrees form a
distributive lattice with a top element (the degree of $\emptyset$) and a
bottom element (the degree of $\Baire$).

There is a close connection between Weihrauch and Medvedev reducibility.
Indeed, we can rephrase the definition of Weihrauch reducibility as follows:
$f\weireducible g$ iff there are two computable functionals
$\Phi\pfunction{\Baire}{\Baire}$ and $\Psi\pfunction{\Baire\times
\Baire}{\Baire}$ such that $\Phi$ witnesses $\dom(g)\medvedevreducible
\dom(f)$ and, for every $p \in \dom(f)$, $\Psi(p,\cdot)$ witnesses $f(p)
\medvedevreducible g(\Phi(p))$. This suggests two possible embeddings of the
Medvedev degrees in the Weihrauch degrees~\cite{HiguchiPauly13}. For our
purposes, we explicitly mention the following one (see \cite[Section 5]{paulybrattka4} for a discussion of the other): For every $A\subseteq
\Baire$, let $d_A\function{A}{\{0^\mathbb{N}\}}$ be the constant function that
maps every element of $A$ to the constantly $0$ string. 
The map $d:=A\mapsto d_A$ induces a lattice embedding of
$\medvedevdegrees^{\mathrm{op}}=(\medvedevdegrees, \geqM)$ in $(\weidegrees,
\weireducible)$ \cite[Lemma 5.6]{HiguchiPauly13}.

A simple inspection reveals that the range of the embedding $d$ is exactly
the set of uniformly computable degrees. To see this, let
$\id\function{\Baire}{\Baire}$ be the identity function on Baire space.
It is immediate that a multi-valued function $f$ is uniformly
computable iff $f\weireducible \id$. In fact, writing $\id\restrict{X}$ for
the restriction of $\id$ to $X$, every problem $f\weireducible \id$ is
Weihrauch-equivalent to $\id\restrict{\dom(f)}$. In other words, the
uniformly computable problems are precisely those equivalent to one of the form
$\id\restrict{X}$ for some $X\subseteq \Baire$, which is, in turn, equivalent to $d_X$. So the lower cone of $\id$ is isomorphic to
$\medvedevdegrees^{\mathrm{op}}$, hence far from trivial. In particular, this implies that the
Medvedev degrees are first-order definable in $(\weidegrees, \weireducible,
\mathbf{1})$, where $\mathbf{1}$ is the Weihrauch degree of $\id$. The
question of whether $\mathbf{1}$ is first-order definable in $(\weidegrees,
\weireducible)$ was raised by Pauly during the conference ``Computability and
Complexity in Analysis 2020'' and the Oberwolfach meeting 2117 \cite{Oberwolfach2021}, see also~\cite{Pauly2020update}. Our results answer this question affirmatively.

The empty intervals in the Medvedev degrees have been fully characterized
in the literature. For every $p\in\Baire$, let $\medvedevsucc{p}:=\{
\str{e}\concat q \st \Phi_e(q)=p $ and $q\not\turingreducible p \}$.

\begin{theorem}[Dyment~{\cite[Cor.\ 2.5]{Dyment1976}}]
	For every $A\strictlymedvedevreducible B$, $B$ is a minimal cover of $A$ iff
	\[	(\exists p\in A)[\; A \medvedevequiv B \wedge \{p\} \text{ and } B
	\wedge \medvedevsucc{p} \medvedevequiv B\;], \]
	where $P \wedge Q:=\str{0}\concat P \cup \str{1}\concat Q$ is the meet in the
	Medvedev degrees.
\end{theorem}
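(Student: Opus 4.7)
The plan is to prove both directions, leveraging as the key base case the fact (also essentially due to Dyment) that $\medvedevsucc{p}$ is itself a minimal cover of $\{p\}$ in the Medvedev degrees: any $D$ with $\{p\}\medvedevreducible D\medvedevreducible \medvedevsucc{p}$ satisfies $D\medvedevequiv\{p\}$ or $D\medvedevequiv\medvedevsucc{p}$. The two conditions in the statement are designed precisely so that any Medvedev degree strictly between $A$ and $B$ can be slotted into this base case.

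For the backward direction, assume the conditions hold for some $p\in A$ and let $A\medvedevreducible C\medvedevreducible B$. I would show $C\medvedevequiv A$ or $C\medvedevequiv B$. Let $D$ be the Medvedev join of $C$ and $\{p\}$. Then $\{p\}\medvedevreducible D$ trivially, and since the hypothesis $B\wedge\medvedevsucc{p}\medvedevequiv B$ is equivalent to $B\medvedevreducible\medvedevsucc{p}$, we also obtain $D\medvedevreducible\medvedevsucc{p}$. The base case forces $D\medvedevequiv\{p\}$ or $D\medvedevequiv\medvedevsucc{p}$. In the first subcase, $C\medvedevreducible D\medvedevequiv\{p\}$ combined with $C\medvedevreducible B$ gives $C\medvedevreducible B\wedge\{p\}\medvedevequiv A$, hence $C\medvedevequiv A$. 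In the second subcase, where $D\medvedevequiv\medvedevsucc{p}$, I would split on whether $\{p\}\medvedevreducible C$: if yes, then $D\medvedevequiv C$, so $C\medvedevequiv\medvedevsucc{p}$, and composing with $B\medvedevreducible\medvedevsucc{p}$ yields $C\medvedevequiv B$; otherwise, one must work harder.

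For the forward direction, fix any $p\in A$, which exists because $A\strictlymedvedevreducible B$ forces $A\neq\emptyset$. One verifies the chain $A\medvedevreducible B\wedge\{p\}\medvedevreducible B\wedge\medvedevsucc{p}\medvedevreducible B$, using that $p\in A$ gives $A\medvedevreducible\{p\}$, that meet is monotone, and that $\{p\}\medvedevreducible\medvedevsucc{p}$. By the minimal cover hypothesis, each of $B\wedge\{p\}$ and $B\wedge\medvedevsucc{p}$ must be Medvedev-equivalent to either $A$ or $B$, leaving the admissible patterns $(A,A)$, $(A,B)$, $(B,B)$. The task is to find some $p\in A$ realizing the middle pattern $(A,B)$ while excluding the degenerate alternatives.

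The hardest step, I expect, is the sub-subcase of the backward direction where $\{p\}$ does not Medvedev-reduce to $C$. There the reduction witnessing $\medvedevsucc{p}\medvedevreducible D$ produces, for each $x\in C$, a pair $\str{e,q}$ with $\Phi_e(q)=p$, $q\turingreducible x\oplus p$, and $q\not\turingreducible p$, so that $q$ must embody information from $x$ beyond what $p$ alone provides. Turning this observation into an actual reduction $B\medvedevreducible C$ is delicate and relies on carefully unpacking the uniform structure of the witnessing functional. The forward direction presents a parallel challenge: securing the existence of $p\in A$ realizing pattern $(A,B)$, likely by showing that if no such $p$ exists then the patterns $(A,A)$ and $(B,B)$ alone permit the construction of an intermediate Medvedev degree between $A$ and $B$, contradicting minimality.
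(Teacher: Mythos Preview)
The paper does not give its own proof of this statement: it is quoted as a background result from the literature, with an explicit citation to Dyment~\cite[Cor.\ 2.5]{Dyment1976}, and no argument is supplied. So there is nothing in the paper to compare your proposal against.

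That said, your outline is honest about its own gaps, and those gaps are real. In the backward direction, sub-subcase~2b (where $D\medvedevequiv\medvedevsucc{p}$ but $\{p\}\not\medvedevreducible C$) is not just ``delicate'' --- your sketch produces, for each $x\in C$, some $q\not\turingreducible p$ computable from $x\oplus p$, but this does not uniformly yield an element of $B$ from $x$ alone, and it is not clear how to eliminate the dependence on $p$ without already having $\{p\}\medvedevreducible C$. In the forward direction, you correctly identify the three admissible patterns for $(B\wedge\{p\},\,B\wedge\medvedevsucc{p})$, but the heart of the matter --- showing that \emph{some} $p\in A$ realizes the pattern $(A,B)$ --- is left entirely open, and your suggested contradiction strategy (build an intermediate degree if every $p$ falls into pattern $(A,A)$ or $(B,B)$) is not fleshed out at all. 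If you want a complete proof you should consult Dyment's original paper or a survey such as Sorbi's or Hinman's treatment of the Medvedev lattice; the argument genuinely requires more than lattice-theoretic bookkeeping.
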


The set $\medvedevsucc{p}$ is the immediate successor of $\{p\}$ in the
Medvedev degrees. In fact, the strong minimal covers  in
$\medvedevdegrees^{\mathrm{op}}$ are precisely those of the form $(\medvedevsucc{p}, \{p\})$. This implies that the property of being a degree
of solvability (i.e., being Medvedev equivalent to a singleton) is
first-order definable in $(\medvedevdegrees,\medvedevreducible)$ (Dyment~\cite[Cor.\
2.1]{Dyment1976}).

In particular, the fact that the lower cone of $\id$ (in the Weihrauch
degrees) is isomorphic to $\medvedevdegrees^{\mathrm{op}}$ immediately
yields:
\begin{corollary}
	For every $p\in\Baire$, $\id\restrict{\{p\}}$ is a strong minimal cover of
	$\id\restrict{\medvedevsucc{p}}$.	
\end{corollary}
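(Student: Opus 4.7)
The plan is to reduce the claim to the corresponding statement in $\medvedevdegrees^{\mathrm{op}}$, which the discussion preceding the corollary has essentially already set up. First, I would invoke the order-isomorphism between the principal ideal $\{\mathbf{a}\in\weidegrees \st \mathbf{a}\weireducible \mathbf{1}\}$ and $\medvedevdegrees^{\mathrm{op}}$ induced by $X\mapsto \id\restrict{X}$. Under this isomorphism, the Medvedev classes $\{p\}$ and $\medvedevsucc{p}$ correspond respectively to the Weihrauch degrees of $\id\restrict{\{p\}}$ and $\id\restrict{\medvedevsucc{p}}$; the order reversal built into $\medvedevdegrees^{\mathrm{op}}$ is precisely what makes $\id\restrict{\medvedevsucc{p}}$ land Weihrauch-below $\id\restrict{\{p\}}$, matching the direction of the statement.

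Second, I would apply the consequence of Dyment's theorem recalled in the excerpt: the strong minimal covers in $\medvedevdegrees^{\mathrm{op}}$ are exactly the pairs of the form $(\medvedevsucc{p}, \{p\})$. Transporting this across the isomorphism yields that, \emph{within} the principal ideal of $\id$, the degree of $\id\restrict{\{p\}}$ is a strong minimal cover of that of $\id\restrict{\medvedevsucc{p}}$: every degree in the ideal that is strictly Weihrauch-below $\id\restrict{\{p\}}$ is already Weihrauch-bounded by $\id\restrict{\medvedevsucc{p}}$.

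Third, I would upgrade this from the ideal to the whole Weihrauch lattice. Since the principal ideal of $\id$ is downward closed, any Weihrauch degree $\mathbf{c}$ with $\mathbf{c}\strictlyweireducible \id\restrict{\{p\}}$ automatically satisfies $\mathbf{c}\weireducible \id$, hence lies in the ideal and is represented by some $\id\restrict{X}$. The strong minimal cover property established inside the ideal then forces $\mathbf{c}\weireducible \id\restrict{\medvedevsucc{p}}$, which is exactly the condition required.

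There is no substantive obstacle: once the identification of the lower cone of $\id$ with $\medvedevdegrees^{\mathrm{op}}$ is in hand, the only thing one must verify is that the strong minimal cover property transfers from a downward-closed subposet to the ambient poset, and this is immediate. The content of the corollary is therefore almost entirely borrowed from Dyment's theorem; the role of the Weihrauch-theoretic argument is just to make the translation rigorous.
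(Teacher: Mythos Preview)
Your proposal is correct and follows essentially the same approach as the paper, which simply invokes the isomorphism between the lower cone of $\id$ and $\medvedevdegrees^{\mathrm{op}}$ together with Dyment's characterization of strong minimal covers there. Your third step (upgrading from the ideal to the full lattice via downward closure) is a detail the paper leaves implicit in the phrase ``immediately yields,'' but it is the right observation and costs nothing.
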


Since $\NREC:=\{ q \st q\not\turingreducible 0\} \medvedevequiv
\medvedevsucc{0^\mathbb{N}}$, we also obtain:

\begin{corollary}\thlabel{thm:id_smc}
	$\id$ is a strong minimal cover of $\id\restrict{\NREC}$.	
\end{corollary}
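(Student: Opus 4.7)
The plan is to apply the preceding corollary with $p = 0^{\mathbb{N}}$ and then translate both sides via Weihrauch equivalences. Specialization gives that $\id\restrict{\{0^{\mathbb{N}}\}}$ is a strong minimal cover of $\id\restrict{\medvedevsucc{0^{\mathbb{N}}}}$, so it suffices to check the two equivalences $\id\restrict{\{0^{\mathbb{N}}\}} \weiequiv \id$ and $\id\restrict{\medvedevsucc{0^{\mathbb{N}}}} \weiequiv \id\restrict{\NREC}$; since being a strong minimal cover is a property of Weihrauch degrees, these equivalences transport the conclusion to the desired pair.

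For the first equivalence, I would observe that $0^{\mathbb{N}}$ is computable, so $\{0^{\mathbb{N}}\} \medvedevequiv \Baire$ (both sit at the bottom of $\medvedevdegrees$). Since the map $A \mapsto \id\restrict{A}$ embeds $\medvedevdegrees^{\mathrm{op}}$ into $\weidegrees$, this yields $\id\restrict{\{0^{\mathbb{N}}\}} \weiequiv \id\restrict{\Baire} = \id$. Spelled out directly, $\id \weireducible \id\restrict{\{0^{\mathbb{N}}\}}$ via the forward reduction $\Phi(p) := 0^{\mathbb{N}}$ and backward reduction $\Psi(p, 0^{\mathbb{N}}) := p$, while the reverse reduction is immediate from $\{0^{\mathbb{N}}\} \subseteq \Baire$.

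For the second equivalence, I would verify the Medvedev equivalence $\NREC \medvedevequiv \medvedevsucc{0^{\mathbb{N}}}$ asserted in the text. Fixing an index $e_0$ of the computable functional that sends every input to $0^{\mathbb{N}}$, the map $q \mapsto \str{e_0}\concat q$ computably sends $\NREC$ into $\medvedevsucc{0^{\mathbb{N}}}$, giving $\medvedevsucc{0^{\mathbb{N}}} \medvedevreducible \NREC$; conversely, the projection $\str{e}\concat q \mapsto q$ sends $\medvedevsucc{0^{\mathbb{N}}}$ into $\NREC$, giving $\NREC \medvedevreducible \medvedevsucc{0^{\mathbb{N}}}$. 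Applying the embedding once more produces $\id\restrict{\NREC} \weiequiv \id\restrict{\medvedevsucc{0^{\mathbb{N}}}}$.

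Since everything is a direct specialization of the preceding corollary combined with two routine checks, there is no serious obstacle. The only mildly surprising point is that $\id\restrict{\{0^{\mathbb{N}}\}}$ already has the same Weihrauch degree as $\id$ itself despite having a strictly smaller domain; noticing this is what permits the translation of the statement into one about $\id$.
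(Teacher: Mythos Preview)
Your proof is correct and follows exactly the approach the paper intends: specialize the preceding corollary to $p = 0^{\mathbb{N}}$ and use the Medvedev equivalence $\NREC \medvedevequiv \medvedevsucc{0^{\mathbb{N}}}$ (together with the trivial $\id\restrict{\{0^{\mathbb{N}}\}} \weiequiv \id$) to transport the conclusion. The paper leaves these two equivalences as one-line remarks, whereas you spell them out, but the argument is the same.
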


\subsection{Our main theorems}

Unlike the Medvedev degrees, there are no results describing the structure of
minimal covers and strong minimal covers in the Weihrauch degrees. Recently,
Dzhafarov, Lerman, Patey, and Solomon~\cite{DamirLuminy} showed that no
Weihrauch degree can be minimal. This result can be obtained as a corollary
of our first main theorem:

\begin{theorem}\thlabel{thm:characterization_mc}
	Let $f$ and $h$ be partial multi-valued functions on Baire space. The following are equivalent:
	\begin{enumerate}
		\item
		$f$ is a minimal cover of $h$ in the Weihrauch degrees.
		\item
		$f \weiequiv h \sqcup \id\restrict{\{p\}}$ for some $p$ with $\dom(h)
		\not\medvedevreducible \{p\}$ and $ \dom(h)\medvedevreducible
		\medvedevsucc{p}$.
	\end{enumerate}
\end{theorem}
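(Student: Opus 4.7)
The plan is to prove the two directions separately: $(2) \Rightarrow (1)$ is a verification leveraging the Dyment-style description of the cone below $\id$, while $(1) \Rightarrow (2)$ requires constructing the witness $p$.

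For $(2) \Rightarrow (1)$, assume $f \weiequiv h \sqcup \id\restrict{\{p\}}$ with the two Medvedev conditions. First, $h \strictlyweireducible f$, because $f \weireducible h$ would yield $\id\restrict{\{p\}} \weireducible h$, i.e., $\dom(h) \medvedevreducible \{p\}$, contradicting the first hypothesis. For minimality, take $h \strictlyweireducible c \weireducible f$ and fix a reduction $(\Phi,\Psi)$ from $c$ to $h \sqcup \id\restrict{\{p\}}$; the first bit of $\Phi(x)$ is defined for every $x \in \dom(c)$ and splits $\dom(c)$ computably into $D_0 \cup D_1$, giving $c \weiequiv c\restrict{D_0} \sqcup c\restrict{D_1}$ with $c\restrict{D_0} \weireducible h$ and $c\restrict{D_1} \weireducible \id\restrict{\{p\}}$. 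Absorbing $c\restrict{D_0}$ into $h$ yields $c \weiequiv h \sqcup c\restrict{D_1}$. Writing $c\restrict{D_1} \weiequiv \id\restrict{A}$ (lower cone of $\id$, isomorphic to $\medvedevdegrees^{\mathrm{op}}$) with $\{p\} \medvedevreducible A$, either $A \medvedevequiv \{p\}$ and then $c \weiequiv f$, or $\{p\} \strictlymedvedevreducible A$. In the latter case, since $A \not\medvedevreducible \{p\}$ no $a \in A$ can satisfy $a \turingreducible p$ (else the constant map $p \mapsto a$ would witness $A \medvedevreducible \{p\}$); combined with the uniform reduction $\Phi_e$ witnessing $\{p\} \medvedevreducible A$, the map $a \mapsto \pairing{e,a}$ witnesses $\medvedevsucc{p} \medvedevreducible A$, hence $c\restrict{D_1} \weireducible \id\restrict{\medvedevsucc{p}}$; combined with $\dom(h) \medvedevreducible \medvedevsucc{p}$, this gives $c\restrict{D_1} \weireducible h$ and therefore $c \weireducible h$, contradicting $h \strictlyweireducible c$.

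For $(1) \Rightarrow (2)$, assume $f$ is a minimal cover of $h$. The key step is to produce some $p$ with $\dom(f) \medvedevreducible \{p\}$ and $\dom(h) \not\medvedevreducible \{p\}$: together these give $\id\restrict{\{p\}} \weireducible f$ and $\id\restrict{\{p\}} \not\weireducible h$, so $h \strictlyweireducible h \sqcup \id\restrict{\{p\}} \weireducible f$, and minimality forces $h \sqcup \id\restrict{\{p\}} \weiequiv f$. The natural candidate is $p = \pairing{x_0, q_0}$ for some $x_0 \in \dom(f)$ and $q_0 \in f(x_0)$, which makes $\dom(f) \medvedevreducible \{p\}$ automatic via recovery of $x_0$; the delicate point is to choose $(x_0, q_0)$ so that $\pairing{x_0, q_0}$ computes no element of $\dom(h)$. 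Once such $p$ is in hand, the last condition $\dom(h) \medvedevreducible \medvedevsucc{p}$ follows by contradiction: if it failed, then $h \sqcup \id\restrict{\medvedevsucc{p}}$ would lie strictly between $h$ and $f$, whence minimality would force $\id\restrict{\{p\}} \weireducible h \sqcup \id\restrict{\medvedevsucc{p}}$; inspecting the first bit of such a reduction rules out both branches ($h$-branch gives $\dom(h) \medvedevreducible \{p\}$ against the choice of $p$, and the $\medvedevsucc{p}$-branch produces an element of $\medvedevsucc{p}$ computable from $p$, against its very definition).

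The main obstacle is the existence of a suitable pair $(x_0, q_0)$. In the bad case, every $q_0 \in f(x_0)$ for every $x_0 \in \dom(f)$ computes some element of $\dom(h)$, providing only a non-uniform (Muchnik-type) reducibility from $\dom(h)$ to the graph of $f$. The technical crux is to use the minimality of $f$ over $h$ to upgrade this to a uniform Weihrauch reduction $f \weireducible h$, contradicting the hypothesis, or to otherwise exhibit a strict intermediate Weihrauch degree between $h$ and $f$. I expect this step to rely on an effective construction that exploits the specific structure of the graph of $f$ (possibly after replacing $f$ by a Weihrauch-equivalent representative whose solutions encode their inputs).
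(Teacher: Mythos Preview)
Your $(2)\Rightarrow(1)$ argument is correct and essentially matches the paper's: both split an intermediate $c$ (the paper calls it $g$) via the first bit of the forward functional into an $h$-part and an $\id\restrict{\{p\}}$-part, then use the Medvedev dichotomy on $A$ to collapse $c$ to either $h$ or $f$. Your ``absorb $c\restrict{D_0}$ into $h$'' step is legitimate because $h\weireducible c$; the paper instead keeps the piece $g_0\weireducible h$ and argues $g_0\sqcup\id\restrict{\{p\}}\weiequiv h\sqcup\id\restrict{\{p\}}$, but this is cosmetic.

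The genuine gap is in $(1)\Rightarrow(2)$. You correctly isolate the crux---finding some $p$ with $\id\restrict{\{p\}}\weireducible f$ and $\dom(h)\not\medvedevreducible\{p\}$---and you correctly note that your candidate $p=\pairing{x_0,q_0}$ fails in the ``bad case'' where every pair in the graph of $f$ computes an element of $\dom(h)$. But you stop there, saying you ``expect'' an effective construction to resolve it. That missing construction is the technical heart of the theorem, and the paper supplies it via two lemmas you do not have. First, a \emph{shuffling lemma}: one builds in stages a finite partial function $\xi\pfunction{\Baire}{\mathbb{N}}$, diagonalizing against all putative witnesses for $F_\xi\weireducible h$ and for $f\weireducible h\sqcup F_\xi$ (where $F_\xi(p,\xi(p)):=f(p)$); the construction must terminate, else $h\sqcup F_\xi$ would sit strictly between $h$ and $f$. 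This yields $f\weiequiv h\sqcup g$ for some $g$ with $\dom(g)=\{p\}$ and $g(p)=f(p)$. Second, a density lemma: if such a singleton-domain $g$ had $g\not\weireducible\id$, then for all but countably many $D\subseteq\mathbb{N}$ one has $g\sqcap\chi_D\not\weireducible h$, and $h\sqcup(g\sqcap\chi_D)$ would again lie strictly between $h$ and $f$. Hence $g\weiequiv\id\restrict{\{p\}}$, and from here your remaining deductions of $\dom(h)\not\medvedevreducible\{p\}$ and $\dom(h)\medvedevreducible\medvedevsucc{p}$ go through and match the paper's.

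Two remarks on your framing. Your heuristic of ``upgrading Muchnik to Weihrauch'' to get $f\weireducible h$ in the bad case is not how the paper proceeds; rather than forcing a reduction, it manufactures an intermediate degree. And your search for $p$ of the form $\pairing{x_0,q_0}$ is not quite the paper's: it finds $p\in\dom(f)$ directly and shows some $q_0\in f(p)$ is $p$-computable, so your $\pairing{x_0,q_0}$ would end up Turing-equivalent to the paper's $p$ anyway.
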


The second main theorem provides a similar characterization for strong
minimal covers:

\begin{theorem}\thlabel{thm:characterization_smc}
	Let $f$ and $h$ be partial multi-valued functions on Baire space. The following are equivalent:
	\begin{enumerate}
		\item
		$f$ is a strong minimal cover of $h$ in the Weihrauch degrees.
		\item
		There is $p\in\Baire$ such that $f \weiequiv \id\restrict{\{p\}}$ and $h
		\weiequiv \id\restrict{\medvedevsucc{p}}$.
	\end{enumerate}
\end{theorem}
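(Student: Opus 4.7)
The direction $(2)\Rightarrow(1)$ is immediate from the corollary stated just before the theorem: for every $p\in\Baire$, $\id\restrict{\{p\}}$ is a strong minimal cover of $\id\restrict{\medvedevsucc{p}}$. Since the strong minimal cover property depends only on the Weihrauch degrees involved, any pair $(f,h)$ satisfying (2) inherits it.

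For $(1)\Rightarrow(2)$, I would begin by applying \thref{thm:characterization_mc}, which is available because every strong minimal cover is in particular a minimal cover. This produces some $p$ with $f\weiequiv h\sqcup\id\restrict{\{p\}}$, $\dom(h)\not\medvedevreducible\{p\}$, and $\dom(h)\medvedevreducible\medvedevsucc{p}$. The first key step is to show that in fact $f\weiequiv\id\restrict{\{p\}}$. From $\id\restrict{\{p\}}\weireducible h\sqcup\id\restrict{\{p\}}\weiequiv f$ we have $\id\restrict{\{p\}}\weireducible f$; if this reduction were strict, then the strong minimal cover hypothesis would give $\id\restrict{\{p\}}\weireducible h$, which would in turn imply $\dom(h)\medvedevreducible\{p\}$, contradicting one of the conditions provided by \thref{thm:characterization_mc}. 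Hence $f\weiequiv\id\restrict{\{p\}}$.

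Once this is established, $h\weireducible f\weireducible\id$, so $h$ lies in the lower cone of $\id$ and therefore $h\weiequiv\id\restrict{\dom(h)}$. Moreover every $c\weireducible f$ also satisfies $c\weireducible\id$, so the strong minimal cover condition for $(f,h)$ is entirely captured by comparisons inside the lower cone of $\id$. Through the lattice isomorphism between this cone and $\medvedevdegrees^{\mathrm{op}}$ provided by the embedding $d$, the condition translates into: $\{p\}$ is a strong minimal cover of $\dom(h)$ in $\medvedevdegrees^{\mathrm{op}}$. Invoking the corollary of Dyment's theorem which characterizes such pairs as precisely those of the form $(\medvedevsucc{p'},\{p'\})$ then forces $\dom(h)\medvedevequiv\medvedevsucc{p}$, and hence $h\weiequiv\id\restrict{\medvedevsucc{p}}$.

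The main obstacle is the first step of $(1)\Rightarrow(2)$: it is the only place where the strong (rather than merely minimal) cover assumption is used in an essential way, and it is what pins $f$ down to be Weihrauch equivalent to a problem in the lower cone of $\id$. Once this is done, the problem reduces to the already understood Medvedev setting, and the remainder is a bookkeeping exercise carefully tracking the order reversal induced by $d$.
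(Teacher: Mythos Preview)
Your proof is correct and follows the same overall arc as the paper: apply \thref{thm:characterization_mc}, show $f\weiequiv\id\restrict{\{p\}}$, then pin down $h$. Your argument for $f\weiequiv\id\restrict{\{p\}}$ (if $\id\restrict{\{p\}}\strictlyweireducible f$ then the strong minimal cover hypothesis forces $\id\restrict{\{p\}}\weireducible h$, hence $\dom(h)\medvedevreducible\{p\}$, contradiction) is logically equivalent to the paper's phrasing via join-irreducibility of the top of a strong minimal cover.

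The only genuine difference is in the final step. You transfer the strong minimal cover property to $\medvedevdegrees^{\mathrm{op}}$ via the embedding and then invoke Dyment's classification of strong minimal covers there. The paper instead finishes by an elementary sandwich argument: once $f\weiequiv\id\restrict{\{p\}}$, one has $h\weiequiv\id\restrict{A}$ with $\{p\}\medvedevreducible A$ (from $h\weireducible f$), $A\not\medvedevreducible\{p\}$, and $A\medvedevreducible\medvedevsucc{p}$ (the last two already supplied by \thref{thm:characterization_mc}), so $\{p\}\strictlymedvedevreducible A\medvedevreducible\medvedevsucc{p}$ forces $A\medvedevequiv\medvedevsucc{p}$. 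The paper's route is slightly more self-contained, since it only needs that $\medvedevsucc{p}$ is the immediate successor of $\{p\}$ rather than the full classification of strong minimal covers in $\medvedevdegrees^{\mathrm{op}}$; your route is perfectly valid but leans on a fact the paper states without proof.
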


A multi-valued function is called \textdef{pointed} if it has a computable
point in its domain. A computational problem $f$ is pointed iff
$\id\weireducible f$. In particular, the cone above $\id$ is exactly the
cone of pointed degrees. Using \thref{thm:characterization_mc}, we can
further characterize the non-pointed degrees.

\begin{corollary}\thlabel{cor:non-pointed}
	Let $g$ be a multi-valued function. The following are equivalent:
	\begin{enumerate}
		\item
		$\id \not\weireducible g$.
		\item
		There are $f,h$ such that  $g \weireducible h \strictlyweireducible f$ and
		$f$ is a minimal cover of $h$.
	\end{enumerate}
\end{corollary}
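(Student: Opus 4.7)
My plan is to prove both implications using \thref{thm:characterization_mc} as the main tool.

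For (2) $\Rightarrow$ (1), I would argue contrapositively: if $\id \weireducible g$, then by transitivity $\id \weireducible h$, so $h$ is pointed and $\dom(h)$ contains a computable element $p_0$. But then the constant functional $\Phi \equiv p_0$ witnesses $\dom(h) \medvedevreducible \{p\}$ for every $p \in \Baire$, directly contradicting the condition $\dom(h) \not\medvedevreducible \{p\}$ attached by \thref{thm:characterization_mc} to every minimal cover of $h$.

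For (1) $\Rightarrow$ (2), the plan is to take $p := 0^{\mathbb{N}}$, so that $\medvedevsucc{p} \medvedevequiv \NREC$, and to build $h$ and $f$ so that the conditions of \thref{thm:characterization_mc} hold automatically. Since $\id \not\weireducible g$ means $\dom(g)$ has no computable element, we have $\dom(g) \subseteq \NREC$, and I would set
\[ h := g \sqcup \id\restrict{\NREC} \quad\text{and}\quad f := h \sqcup \id\restrict{\{0^{\mathbb{N}}\}}. \]
Then $g \weireducible h$ is immediate from the definition of $\sqcup$, and $\dom(h) = \{0\} \times \dom(g) \cup \{1\} \times \NREC$. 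The map $x \mapsto (1,x)$ witnesses $\dom(h) \medvedevreducible \NREC$, and $\dom(h) \not\medvedevreducible \{0^{\mathbb{N}}\}$ because the image of the computable point $0^{\mathbb{N}}$ under any computable functional is again computable, whereas $\dom(h)$ has no computable element. By \thref{thm:characterization_mc} this forces $f$ to be a minimal cover of $h$.

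The only real design decision is the choice of $h$. Taking $h := g$ would fail in general because $\dom(g) \medvedevreducible \medvedevsucc{p}$ need not hold for any natural choice of $p$. Joining $g$ with $\id\restrict{\NREC}$ is what pushes $\dom(h)$ Medvedev-below $\medvedevsucc{0^{\mathbb{N}}} \medvedevequiv \NREC$ while keeping $g \weireducible h$, exploiting exactly the containment $\dom(g) \subseteq \NREC$ delivered by the non-pointedness of $g$; beyond identifying this construction, no substantive technical obstacle is anticipated.
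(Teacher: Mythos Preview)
Your proposal is correct and follows essentially the same approach as the paper: for $(2)\Rightarrow(1)$ both arguments invoke \thref{thm:characterization_mc} to rule out a computable point in $\dom(h)$, and for $(1)\Rightarrow(2)$ both take $h:=g\sqcup\id\restrict{\NREC}$ and $f:=h\sqcup\id\restrict{\{0^{\mathbb N}\}}\weiequiv g\sqcup\id$, with the minimal-cover condition coming from \thref{thm:characterization_mc} via $\dom(h)\not\medvedevreducible\{0^{\mathbb N}\}$ and $\dom(h)\medvedevreducible\NREC\medvedevequiv\medvedevsucc{0^{\mathbb N}}$. Your write-up is simply more explicit about verifying these two Medvedev conditions than the paper's one-line appeal.
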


\begin{proof}
	The direction $(2)\Rightarrow (1)$ is straightforward  as 
	\thref{thm:characterization_mc} implies that the bottom of a minimal cover cannot have
	a computable point in its domain. To show that $(1)\Rightarrow (2)$, observe
	that if $\id \not\weireducible g$, then $g\weireducible g \sqcup \id
	\restrict{\NREC} \strictlyweireducible g \sqcup \id$. In particular, since $g
	\sqcup \id\restrict{\NREC} \sqcup \id\restrict{\{0^\mathbb{N}\}} \weiequiv g
	\sqcup \id$, by \thref{thm:characterization_mc}, the interval $g \sqcup
	\id\restrict{\NREC} \strictlyweireducible g \sqcup \id$ is empty.
\end{proof}

\begin{corollary}\thlabel{thm:dense_above_id}
	The pointed Weihrauch degrees are dense.
\end{corollary}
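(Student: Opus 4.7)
The plan is to derive density of the pointed Weihrauch degrees directly from \thref{thm:characterization_mc}. If $f$ is pointed and $f \strictlyweireducible g$, I want to show that $g$ is not a minimal cover of $f$; this is enough, because any degree $k$ with $f \weireducible k$ automatically satisfies $\id \weireducible f \weireducible k$ and is therefore itself pointed.

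The key observation is that a pointed problem $f$ has a computable point $q_0 \in \dom(f)$, so the constant computable functional $\Phi(x) := q_0$ witnesses $\dom(f) \medvedevreducible B$ for every nonempty $B \subseteq \Baire$. In particular, $\dom(f) \medvedevreducible \{p\}$ holds for every $p \in \Baire$.

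Combining this with \thref{thm:characterization_mc}, no pointed problem can play the role of $h$ in the characterization, since the clause $\dom(h) \not\medvedevreducible \{p\}$ fails for every $p$. Consequently, if $f$ is pointed and $f \strictlyweireducible g$, then $g$ is not a minimal cover of $f$, so there exists $k$ with $f \strictlyweireducible k \strictlyweireducible g$; as noted above, this $k$ is pointed. I do not anticipate a genuine obstacle: the corollary is essentially a one-line reading of \thref{thm:characterization_mc} once one notices that pointedness rules out the non-reducibility condition on $\dom(h)$.
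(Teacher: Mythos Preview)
Your argument is correct and essentially matches the paper's approach: the paper does not spell out a proof for this corollary, but it places it immediately after \thref{cor:non-pointed}, whose $(2)\Rightarrow(1)$ direction is proved by exactly the observation you make---namely, that \thref{thm:characterization_mc} forces $\dom(h)\not\medvedevreducible\{p\}$, which is impossible when $h$ has a computable point in its domain. Your direct appeal to \thref{thm:characterization_mc} is the same reasoning without the detour through \thref{cor:non-pointed}.
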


Our results provide two different first-order definitions of the degree of $\id$, thus the property of being uniformly computable is lattice-theoretic, answering the above-mentioned question by Pauly.

\begin{theorem}\thlabel{thm:definability_id}
The Weihrauch degree of $\id$ is first-order definable in $(\mathcal{W},\weireducible)$. In particular, it is both:
\begin{enumerate}
\item the greatest degree that is a strong minimal cover, and
\item the least degree such that the cone of Weihrauch degrees above it is dense.
\end{enumerate}
\end{theorem}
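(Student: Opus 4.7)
The plan is to establish the two characterizations (1) and (2) of the degree of $\id$ separately; each can be written as a formula in the signature $\{\weireducible\}$ alone, so either one already yields first-order definability.

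For (1), I would appeal to \thref{thm:characterization_smc}: every strong minimal cover in the Weihrauch degrees is Weihrauch-equivalent to some $\id\restrict{\{p\}}$ with $p \in \Baire$, and $\id\restrict{\{p\}} \weireducible \id$ trivially. Conversely, \thref{thm:id_smc} already exhibits $\id$ itself as a strong minimal cover (of $\id\restrict{\NREC}$). Combining the two facts, the degree of $\id$ is the maximum of the set of degrees that occur as strong minimal covers, which is what (1) asserts.

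For (2), density of the upward cone of $\id$ is exactly \thref{thm:dense_above_id}. For the converse, I would take any $g$ with $\id \not\weireducible g$ and apply \thref{cor:non-pointed} to produce $h, f$ with $g \weireducible h \strictlyweireducible f$ and $f$ a minimal cover of $h$; this exhibits a non-trivial empty interval in the upward cone of $g$, so that cone cannot be dense. Additionally, for any $\mathbf{g} \geq \mathbf{1}$ density above $\mathbf{g}$ is inherited from density above $\id$: any intermediate supplied for a pair $a < b \geq \mathbf{g}$ in the cone above $\id$ automatically lies above $\mathbf{g}$. So $\id$ is the least degree with a dense upward cone.

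I do not anticipate any real obstacle in the argument: the substantive work is entirely absorbed by \thref{thm:characterization_smc}, \thref{thm:dense_above_id}, and \thref{cor:non-pointed}. The only remaining point is the unfolding that both ``being the greatest strong minimal cover'' and ``being the least degree with dense upward cone'' are first-order expressible in $(\weidegrees, \weireducible)$, from which first-order definability of $\mathbf{1}$ follows immediately.
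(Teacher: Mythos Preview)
Your proposal is correct and follows essentially the same approach as the paper: part (1) via \thref{thm:characterization_smc} together with \thref{thm:id_smc}, and part (2) via \thref{cor:non-pointed} (which already subsumes \thref{thm:dense_above_id}). Your additional remark that density is inherited by every $\mathbf{g} \geq \mathbf{1}$ is correct but unnecessary, since showing that the cone above $\id$ is dense and that no $g$ with $\id \not\weireducible g$ has a dense upper cone already establishes $\id$ as the least degree with this property.
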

\begin{proof}
The first definition follows from \thref{thm:characterization_smc} and the fact that $\id$ is a strong minimal cover (\thref{thm:id_smc}). The second is immediate from \thref{cor:non-pointed}.
\end{proof}

Finally, the definability of $\id$ implies that the first-order theory of the Weihrauch degrees is computably isomorphic to the third-order theory of
arithmetic, and therefore it is ``as complicated as possible''.

\begin{theorem}
	The first-order theory of the Weihrauch degrees, the first-order theory of
	the Weihrauch degrees below $\id$, and the third-order theory of true
	arithmetic are pairwise recursively isomorphic.
\end{theorem}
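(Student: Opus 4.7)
The plan is to establish a cycle of recursive reductions among the three theories and then invoke Myhill's isomorphism theorem. Write $\theory(\weidegrees)$, $\theory(\weidegrees_{\leq\id})$, and $\theory_3(\mathbb{N})$ for the first-order theory of the Weihrauch degrees, the first-order theory of its initial segment below $\id$, and third-order true arithmetic, respectively. It suffices to show
\[ \theory_3(\mathbb{N}) \leq_m \theory(\weidegrees_{\leq\id}) \leq_m \theory(\weidegrees) \leq_m \theory_3(\mathbb{N}). \]

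The middle reduction is immediate from \thref{thm:definability_id}: relativize every quantifier in a sentence $\varphi$ to the defining formula for the lower cone of $\id$. For the right-hand reduction, every Weihrauch degree is represented by a partial multi-valued function on $\Baire$, which codes as an element of $\mathcal{P}(\mathcal{P}(\mathbb{N}))$, and the relation $\weireducible$ is arithmetical in the two representatives (existential quantification over indices of computable functionals). Hence the Tarski truth predicate for $(\weidegrees, \weireducible)$ is definable in third-order arithmetic.

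The nontrivial reduction is $\theory_3(\mathbb{N}) \leq_m \theory(\weidegrees_{\leq\id})$. I would factor it through the Medvedev degrees: as noted in the introduction, the embedding $d$ realizes the lower cone $\weidegrees_{\leq\id}$ as an isomorphic copy of $\medvedevdegrees^{\mathrm{op}}$, so $\theory(\weidegrees_{\leq\id})$ and $\theory(\medvedevdegrees, \medvedevreducible)$ coincide up to the trivial syntactic inversion of the order relation, and are in particular recursively isomorphic. By the coding arguments established for the Medvedev lattice (in the spirit of Shore's analysis of the Turing degrees, adapted to $\medvedevdegrees$ via the definability of the degrees of solvability), $\theory(\medvedevdegrees, \medvedevreducible)$ is recursively isomorphic to $\theory_3(\mathbb{N})$, yielding the desired reduction. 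Once all three reductions are in place, Myhill's isomorphism theorem converts the resulting pairwise recursive equivalence into pairwise recursive isomorphism.

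The main obstacle is the last step: pinning down the complexity of $\theory(\medvedevdegrees, \medvedevreducible)$ at the level of third-order arithmetic. If a direct reference does not suffice, I would verify this by interpreting third-order arithmetic parameter-definably in $\medvedevdegrees$, using that first- and second-order arithmetic are already interpretable with parameters (via the rich coding of arithmetic available once degrees of solvability are distinguished) and that quantification over subsets of $\medvedevdegrees$ supplies the additional type needed to simulate third-order quantification. The remaining bookkeeping—choosing effective enumerations of sentences and applying Myhill—is routine once the three many-one reductions are in hand.
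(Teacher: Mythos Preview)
Your proposal is correct and follows essentially the same route as the paper: both arguments reduce $\theory(\weidegrees)$ and $\theory(\weidegrees_{\le\id})$ to $\theory_3(\mathbb{N})$ by observing that Weihrauch reducibility is definable in third-order arithmetic, reduce $\theory(\weidegrees_{\le\id})$ to $\theory(\weidegrees)$ via the first-order definability of $\id$, and close the cycle through the isomorphism $\weidegrees_{\le\id}\cong\medvedevdegrees^{\mathrm{op}}$ together with the known fact that $\theory(\medvedevdegrees)\equiv_1\theory_3(\mathbb{N})$ (the paper cites Shafer and, independently, Lewis--Nies--Sorbi).

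One caveat on your fallback sketch: in the first-order theory of $\medvedevdegrees$ you do \emph{not} have ``quantification over subsets of $\medvedevdegrees$''; the third type arises because the first-order variables already range over (degrees of) subsets of $\Baire$, while the lower two types are recovered internally via the definable degrees of solvability. Since the result is in the literature, you can simply cite it as the paper does and drop the fallback.
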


\begin{proof}
	It is routine to check that Weihrauch reducibility between two multi-valued
	functions~$f$ and~$g$ can be defined using a $\Pi^1_2$ formula (with free
	third-order variables $f$ and $g$). This immediately implies that
	$\theory(\weidegrees(\le \id)) \le_1 \theory_3(\mathbb{N})$ and
	$\theory(\weidegrees) \le_1 \theory_3(\mathbb{N})$. The fact that the degree
	of $\id$ is first-order definable in $(\weidegrees, \weireducible)$
	(\thref{thm:definability_id}) yields $\theory(\weidegrees(\le \id)) \le_1
	\theory(\weidegrees)$. Since, as mentioned, the lower cone of $\id$ is
	isomorphic to $\medvedevdegrees^{\mathrm{op}}$, the statement follows from
	the fact that $\theory(\medvedevdegrees) \equiv_1 \theory_3(\mathbb{N})$
	(\cite[Thm.\ 3.13]{Shafer2011} and independently \cite[Thm.\ 2]{LNS09}).
\end{proof}

\section{Proof of the main theorems}

Before proving the main theorems, we need some preliminary results. The
following lemma is a step towards proving the first main theorem. It
implicitly shows that if $f$ is not a minimal cover of $h$, then there is a
uniform way to construct a problem $g$ such that $h\strictlyweireducible g
\strictlyweireducible f$.

\begin{lemma}\thlabel{thm:shuffling}
	Let $h \weireducible f$. If $f$ is a minimal cover of $h$ then there is $g$
	with $|\dom(g)|=1$ such that $f \weiequiv h \sqcup g$.
\end{lemma}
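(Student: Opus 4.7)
The natural first attempt is to try $g := f\restrict{\{p\}}$ for varying $p \in \dom(f)$. Since $h \weireducible f$ and $f\restrict{\{p\}} \weireducible f$, we always have $h \weireducible h \sqcup f\restrict{\{p\}} \weireducible f$, so the minimality of the cover triggers, for each $p$, the dichotomy $h \sqcup f\restrict{\{p\}} \weiequiv h$ or $h \sqcup f\restrict{\{p\}} \weiequiv f$. If the second alternative holds for some $p$, then $g := f\restrict{\{p\}}$ satisfies $|\dom(g)| = 1$ and $f \weiequiv h \sqcup g$, and we are done.

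It remains to handle the case in which the first alternative holds for every $p \in \dom(f)$, i.e., $f\restrict{\{p\}} \weireducible h$ for every $p$. In this case, for each $p$ some pair of computable functionals $(\Phi_{e_p}, \Phi_{i_p})$ witnesses the reduction, and grouping inputs by valid witness decomposes
\[ \dom(f) = \bigcup_{(e,i) \in \mathbb{N}^2} A_{e,i}, \quad A_{e,i} := \{p \in \dom(f) \st (\Phi_e, \Phi_i) \text{ witnesses } f\restrict{\{p\}} \weireducible h\}, \]
so on each $A_{e,i}$ the restriction $f\restrict{A_{e,i}}$ uniformly reduces to $h$. If some $A_{e,i}$ equals $\dom(f)$, then $f \weireducible h$ in one step, contradicting $h \strictlyweireducible f$.

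The main obstacle I anticipate is ruling out the subcase where no single $A_{e,i}$ exhausts $\dom(f)$: the non-uniformity of the witnesses $p \mapsto (e_p,i_p)$ is in general a genuine obstruction to a uniform reduction $f \weireducible h$, so closing this subcase must leverage the minimality hypothesis nontrivially. My plan is to use the countable cover by the $A_{e,i}$'s to construct a single-input problem $g^*$ with $g^* \weireducible f$ but $g^* \not\weireducible h$---for instance, a problem encoding as ``advice'' a sufficiently rich selector across the $A_{e,i}$'s that can still be extracted from a single $f$-answer. Then $h \sqcup g^*$ would lie strictly between $h$ and $f$, contradicting the minimality of the cover and forcing the dichotomy's second alternative to occur for some $p$, which completes the proof.
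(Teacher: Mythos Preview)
Your opening dichotomy is correct, and in fact the paper's argument shows that its second branch always fires: there is necessarily some $p\in\dom(f)$ with $f\restrict{\{p\}}\not\weireducible h$, and then minimality forces $h\sqcup f\restrict{\{p\}}\weiequiv f$. So the whole content of the lemma is ruling out your ``bad case'', and that is precisely where your write-up stops being a proof and becomes a wish.

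There are two problems with your plan for the bad case. First, the logic is tangled: if you succeed in building a single-input $g^*$ with $g^*\weireducible f$ and $g^*\not\weireducible h$, then $h\strictlyweireducible h\sqcup g^*\weireducible f$, and minimality gives $h\sqcup g^*\weiequiv f$; you have \emph{solved the lemma}, not produced a contradiction. Nothing you wrote forces $h\sqcup g^*\strictlyweireducible f$, so the phrase ``strictly between'' is unjustified. Second, and more seriously, the construction of $g^*$ is only a slogan (``a sufficiently rich selector across the $A_{e,i}$'s that can still be extracted from a single $f$-answer''). You have not said what the unique input of $g^*$ is, what its solution set is, why it Weihrauch-reduces to $f$, or---the crux---why it fails to reduce to $h$. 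The countable cover $\dom(f)=\bigcup_{e,i}A_{e,i}$ with each $f\restrict{A_{e,i}}\weireducible h$ does not by itself hand you such an object; turning this non-uniformity into a concrete intermediate problem is exactly the work the lemma requires.

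The paper fills this gap with an explicit stagewise diagonalization. One builds a partial labeling $\xi\pfunction{\dom(f)}{\mathbb{N}}$, finite at every stage, and sets $F_\xi(p,\xi(p)):=f(p)$. Even stages add one point to $\xi$ to defeat a fixed pair of functionals as a putative witness for $F_\xi\weireducible h$ (using that any total computable extension $\hat\xi$ gives $F_{\hat\xi}\weiequiv f\not\weireducible h$); odd stages try to add one point to defeat a fixed pair as a witness for $f\weireducible h\sqcup F_\xi$. If the construction never halts, the limit $\xi$ yields $h\strictlyweireducible h\sqcup F_\xi\strictlyweireducible f$, contradicting minimality. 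Hence at some odd stage $f\weireducible h\sqcup F_{\xi_s}$ with $\xi_s$ finite, so $f\weiequiv h\sqcup F_{\xi_s}$ and $F_{\xi_s}\weireducible\bigsqcup_{i<n} f\restrict{\{p_i\}}$. If every $f\restrict{\{p_i\}}\weireducible h$ we would get $f\weireducible h$, so some $c_i:=f\restrict{\{p_i\}}\not\weireducible h$, and minimality gives $f\weiequiv h\sqcup c_i$. In your language, this last line says your bad case is impossible; but the substance is the diagonalization showing that finitely many inputs of $f$ already suffice to recover $f$ over $h$, which is the missing idea in your sketch.
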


\begin{proof}
	We first outline the proof strategy: we construct a partial single-valued function
	$\xi\pfunction{\Baire}{\mathbb{N}}$ in stages, so that $\xi$ is defined on at
	most~$s$ points by stage~$s$. Let $F_\xi$ be the problem defined as
	$F_\xi(p,\xi(p)):=f(p)$. The function $\xi$ ``scrambles'' the domain of $f$: it
	is immediate that, for every choice of the function~$\xi$,
	$F_\xi\weireducible f$. The converse reduction trivially holds when $\xi$ is
	computable, but it does not hold in general. The construction attempts to
	build a function $\xi$ so that
	\begin{itemize}
		\item $F_\xi\not\weireducible h$
		\item $h \sqcup F_\xi\strictlyweireducible f$.
	\end{itemize}
	Since this would contradict our assumptions, we argue that the construction
	must fail. The failure of our construction will result in the desired
	function~$g$.
	
	The construction of $\xi$ proceeds as follows: for every stage $s$, we define
	a partial function $\xi_s$. We start the construction by letting
	$\xi_0:=\emptyset$. For the sake of readability, let us write
	$F_s:=F_{\xi_s}$.
	
	At stage $s+1=2\pairing{e,i}$, we extend $\xi_{s}$ so that $F_\xi
	\not\weireducible h$ via $\Phi_e,\Phi_i$. Since $\xi_{s}$ has finite domain
	and codomain $\mathbb{N}$, it has a total computable extension
	$\hat{\xi}\pfunction{\Baire}{\mathbb{N}}$. As observed, $F_{\hat{\xi}}
	\weiequiv f$, hence in particular $F_{\hat\xi} \not\weireducible h$ via
	$\Phi_e, \Phi_i$. By the definition of Weihrauch reducibility, there is some
	$p_0\in\dom(f)$ such that either $\Phi_e(p_0,\hat{\xi}(p_0)) \notin \dom(h)$
	or, for some $q\in h(\Phi_e(p_0,\hat{\xi}(p_0)))$,
	$\Phi_i((p_0,\hat{\xi}(p_0)),q) \notin
	F_{\hat\xi}(p_0,\hat{\xi}(p_0))=f(p_0)$. Fix some $p_0$ as above. Defining
	$\xi_{s+1}:=\xi_{s}\cup \{(p_0,\hat{\xi}(p_0))\}$ ensures that there is no
	extension $\xi'$ of $\xi_{s+1}$ such that $F_{\xi'} \weireducible h$ via the
	functionals $\Phi_e,\Phi_i$.
	
	At stage $s+1=2\pairing{e,i}+1$, we try to extend $\xi_s$ so that $f
	\not\weireducible h \sqcup F_\xi$ via $\Phi_e,\Phi_i$. The construction stops
	if $\Phi_e, \Phi_i$ witness $f\weireducible h \sqcup F_s$, as the same pair
	of functionals would witness the reduction $f\weireducible h \sqcup F_{\xi'}$ for any extension $\xi'$ of $\xi_s$. In this case, we simply define $\xi:=\xi_s$. Assume therefore that $\Phi_e,\Phi_i$ do not witness $f\weireducible h \sqcup F_s$, and
	let $p_1$ be such that either $\Phi_e(p_1)\notin\dom(h\sqcup F_s)$ or, for
	some $q \in (h\sqcup F_s)(\Phi_e(p_1))$, $\Phi_i(p_1,q)\notin f(p_1)$. If
	\begin{itemize}
		\item
		$\Phi_e(p_1)\uparrow$, or
		\item
		$\Phi_e(p_1)=(0,r)$ for some $r\notin\dom(h)$, or
		\item
		$\Phi_e(p_1)=(1,(p,k))$ for some $p$ such that $p\notin\dom(f)$ or
		$p\in\dom(\xi_s)$ with $\xi_s(p)\neq k$, or
		\item
		$\Phi_e(p_1)\in\dom(h\sqcup F_s)$ and for some $q \in (h\sqcup
		F_s)(\Phi_e(p_1))$, $\Phi_i(p_1,q)\notin f(p_1)$,
	\end{itemize}
	then there is no extension $\xi'$ of $\xi_s$ such that $f \weireducible h\sqcup F_{\xi'}$
	via the functionals $\Phi_e,\Phi_i$, hence we can just define
	$\xi_{s+1}:=\xi_s$. The remaining case is that $\Phi_e(p_1)=(1,(p,k))$ for
	some $p\in\dom(f)\setminus \dom(\xi_s)$. We define $\xi_{s+1}:=\xi \cup
	\{(p_1, k+1)\}$. Again, this ensures that there is no extension $\xi'$ of $\xi_{s+1}$
	such that $\Phi_e,\Phi_i$ witness that $f \weireducible h \sqcup
	F_{\xi'}$.
	
	Observe that, if for every $s+1=2\pairing{e,i}+1$, $f \not\weireducible h
	\sqcup F_s$, then, in the limit, we obtain a function $\xi$ such that
	$h\strictlyweireducible h\sqcup F_\xi \strictlyweireducible f$, against the
	assumption that $f$ is a minimal cover of $h$. This implies that, for some
	$s$ as above, $f \weireducible h \sqcup F_s = h\sqcup F_\xi$. Moreover,
	$\xi$, and hence $F_\xi$, has finite domain, which in turn implies that
	$f\weiequiv h\sqcup F_\xi$.
	
	Let $(p_i)_{i<n}$ be an enumeration of $\dom(\xi)$, and let $c_i$ be the function with domain $\{p_i\}$ defined as $c_i(p_i) := f(p_i)$. 
	Since $\dom(\xi)$ is finite, we immediately have $F_\xi
	\weireducible \bigsqcup_{i<n} c_i$. If, for all $i<n$, $c_i\weireducible h$,
	we would have $f \weireducible h \sqcup F_\xi\weireducible h$, which is a
	contradiction. Hence, for some $i<n$, $h\strictlyweireducible h\sqcup c_i
	\weireducible h\sqcup F_\xi \weiequiv f$. The fact that $f$ is a minimal
	cover of $h$ implies that $f \weiequiv h\sqcup c_i$. Letting $g:=c_i$
	concludes the proof.
\end{proof}

\begin{corollary}\thlabel{thm:smc_singleton}
	If $f$ is a strong minimal cover of $h$, then there is $g\weiequiv f$ such
	that $|\dom(g)|=1$.
\end{corollary}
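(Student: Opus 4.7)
The plan is to derive this corollary directly from \thref{thm:shuffling} by exploiting the defining property of strong minimal covers. First, I would observe that every strong minimal cover is in particular a minimal cover: if $h \strictlyweireducible c \strictlyweireducible f$, then $c \strictlyweireducible f$ would force $c \weireducible h$ by the strong minimal cover hypothesis, contradicting $h \strictlyweireducible c$. Hence $f$ is a minimal cover of $h$, and \thref{thm:shuffling} yields some $g$ with $|\dom(g)|=1$ and $f \weiequiv h \sqcup g$.

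It then remains to upgrade this equivalence to $g \weiequiv f$. Since $g \weireducible h \sqcup g \weiequiv f$, the only two possibilities are $g \weiequiv f$ or $g \strictlyweireducible f$. In the latter case, the strong minimal cover assumption gives $g \weireducible h$, so that $f \weiequiv h \sqcup g \weiequiv h$, contradicting $h \strictlyweireducible f$. Therefore $g \weiequiv f$, and this $g$ is the desired witness.

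There is essentially no obstacle here beyond invoking \thref{thm:shuffling}; the rest is a one-line dichotomy argument against the strong minimal cover hypothesis. The content of the corollary really lives in the earlier lemma, which already did the hard work of scrambling the domain to produce a single-point witness up to Weihrauch equivalence.
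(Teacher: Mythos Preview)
Your proof is correct and follows essentially the same approach as the paper: apply \thref{thm:shuffling} to obtain $g$ with singleton domain and $f \weiequiv h \sqcup g$, then use the strong minimal cover dichotomy on $g \weireducible f$ to rule out $g \weireducible h$. You even make explicit the observation that a strong minimal cover is a minimal cover (needed to invoke the lemma), which the paper leaves implicit.
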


\begin{proof}
	By \thref{thm:shuffling}, there is $g$ with $|\dom(g)|=1$ such that $f
	\weiequiv h \sqcup g$. Since $g\weireducible f$, the fact that $f$ is a
	strong minimal cover of $h$, implies that $g\weiequiv f$ or $g\weireducible
	h$. The latter is readily seen to yield a contradiction, as $g\weireducible
	h\Rightarrow h\strictlyweireducible f \weiequiv h \sqcup g \weiequiv h$,
	hence $g\weiequiv f$.
\end{proof}

For every set $A$, let $\chi_A$ denote the characteristic function of $A$.
\begin{lemma}
	\thlabel{thm:chi_D}
	If $f \not\weireducible h$, then there are at most countably many $D
	\subseteq \mathbb{N}$ such that $f \sqcap \chi_D \weireducible h$.
\end{lemma}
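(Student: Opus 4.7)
The plan is to argue by contradiction: assume there are uncountably many $D\subseteq\mathbb{N}$ with $f\sqcap\chi_D\weireducible h$ and derive $f\weireducible h$, contradicting the hypothesis. The engine of the proof is a pigeonhole argument combined with the observation that two different sets $D$ sharing a common witness of reduction force the $\chi$ branch of the meet to become ``invisible.''

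First I would invoke the countability of pairs of computable functionals. For each such $D$, fix a witnessing pair $(\Phi_D,\Psi_D)$ for the reduction $f\sqcap\chi_D\weireducible h$. Since there are only countably many pairs of computable functionals, pigeonhole yields two distinct sets $D_0,D_1\subseteq\mathbb{N}$ and a single common pair $(\Phi,\Psi)$ that simultaneously witnesses both $f\sqcap\chi_{D_0}\weireducible h$ and $f\sqcap\chi_{D_1}\weireducible h$.

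Next I would exploit the disagreement. Fix any $n\in D_0\triangle D_1$, so that $\chi_{D_0}(n)\neq \chi_{D_1}(n)$. For every $p\in\dom(f)$ we have $\Phi(p,n)\in\dom(h)$, and for every $q\in h(\Phi(p,n))$ the value $\Psi((p,n),q)$ must lie both in $\{0\}\times f(p)\cup\{1\}\times\{\chi_{D_0}(n)\}$ and in $\{0\}\times f(p)\cup\{1\}\times\{\chi_{D_1}(n)\}$. Because the two singletons on the right-hand side differ, the intersection collapses to $\{0\}\times f(p)$, so $\Psi((p,n),q)=(0,r)$ for some $r\in f(p)$.

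From this I would read off a reduction $f\weireducible h$ by taking $\Phi'(p):=\Phi(p,n)$ and letting $\Psi'(p,q)$ be the second coordinate of $\Psi((p,n),q)$; both are computable since $n$ is a fixed parameter. This contradicts $f\not\weireducible h$ and completes the proof. I do not expect any real obstacle: the argument is a clean pigeonhole plus a one-line intersection computation. The only mild subtlety is book-keeping the shape of solutions to $f\sqcap\chi_{D_i}$, which is immediate from the definition of $\sqcap$.
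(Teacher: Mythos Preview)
Your proof is correct and is essentially the same argument as the paper's, just packaged as a contradiction via pigeonhole rather than the paper's direct statement that each pair $(\Phi,\Psi)$ can witness $f\sqcap\chi_D\weireducible h$ for at most one $D$. The core observation is identical: if the same pair worked for $D_0\neq D_1$, then at any $n\in D_0\triangle D_1$ the $\chi$-branch is ruled out and one reads off $f\weireducible h$.
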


\begin{proof}
	We argue that each pair of potential reduction witnesses $\Phi,\Psi$ for $f
	\sqcap \chi_D \weireducible h$ can work for at most one choice of $D$. To see
	this, assume that $f \sqcap \chi_D \weireducible h$ via $\Phi,\Psi$. Notice
	that if there is $n_0\in\mathbb{N}$ such that
	\[
	(\forall x \in\dom(f))(\forall y \in h(\Phi(x,n_0)))(\exists z\in
	f(x))\; \Psi((x,n_0),y)=(0,z) ,
	\]
	then we would have $f\weireducible h$, against the hypothesis.
	
	This implies that, for every $D\subseteq \mathbb{N}$, if $\Phi,\Psi$ witness
	the reduction $f\sqcap \chi_D\weireducible h$ then for every $n\in\mathbb{N}$
	there exists some $x \in \dom(f)$ and some $y \in h(\Phi(x,n))$ such that
	$\Psi((x,n),y) = (1,\chi_D(n))$. In other words, membership of $n$ in $D$ is
	determined by the pair $\Phi,\Psi$, and hence the same pair cannot witness
	the reduction $f\sqcap \chi_E\weireducible h$ for any set $E\neq D$.
\end{proof}

In fact, if $f \not\weireducible h$ then there are exactly countably many
$D\subseteq \mathbb{N}$ such that $f \sqcap \chi_D \weireducible h$ if and only if
$\dom(h)\medvedevreducible \dom(f)$. (Otherwise, it is never the case that $f \sqcap \chi_D \weireducible h$.)

\begin{lemma}\thlabel{thm:non_empty_interval}
	If $f\not\weireducible \id$ and $f$ has singleton domain, then for all $h$
	such that $f\not\weireducible h$ there is $g \strictlyweireducible f$ such
	that $g\not\weireducible h$. It follows that $h\strictlyweireducible h\sqcup g
	\strictlyweireducible h \sqcup f$.
\end{lemma}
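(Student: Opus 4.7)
The plan is to take $g := f\sqcap\chi_D$ for a suitably chosen $D\subseteq\mathbb{N}$, exploiting \thref{thm:chi_D}. Let $p$ denote the unique element of $\dom(f)$. The key first observation I would establish is that $f\not\weireducible f\sqcap\chi_D$ holds for \emph{every} $D$: any candidate reduction $(\Phi,\Psi)$ must have $\Phi(p)=(p,n)$ for some $n$ and must handle the solver response $(1,\chi_D(n))$, forcing $\Psi(p,(1,\chi_D(n)))\in f(p)$. But that value is $p$-computable, hence witnesses $f\weireducible\id$, contradicting the hypothesis. So $f\sqcap\chi_D\strictlyweireducible f$ uniformly in $D$.

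Next, I would invoke \thref{thm:chi_D} applied to the pair $(f,h)$: since $f\not\weireducible h$, only countably many $D\subseteq\mathbb{N}$ satisfy $f\sqcap\chi_D\weireducible h$. Picking any $D$ outside this countable set and setting $g := f\sqcap\chi_D$ yields the required problem, namely $g\strictlyweireducible f$ from the first step and $g\not\weireducible h$ by the choice of $D$.

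Finally, for the inequality chain $h\strictlyweireducible h\sqcup g\strictlyweireducible h\sqcup f$, the left inequality is immediate from $g\not\weireducible h$, and for the right one $h\sqcup g\weireducible h\sqcup f$ follows from $g\weireducible f$. Strictness reduces to $f\not\weireducible h\sqcup g$, which I would prove by case analysis on $\Phi(p)$: a landing in $\{0\}\times\dom(h)$ would yield a reduction $f\weireducible h$, contradicting the hypothesis, while a landing in $\{1\}\times\dom(g)=\{1\}\times(\{p\}\times\mathbb{N})$ would, via the solver response $(1,(1,\chi_D(n)))$, force $\Psi$ to produce a $p$-computable element of $f(p)$, contradicting $f\not\weireducible\id$. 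The substantive step — really the only nontrivial one — is the opening observation: once one notices that the singleton-domain assumption together with $f\not\weireducible\id$ kills every reduction $f\weireducible f\sqcap\chi_D$, \thref{thm:chi_D} does the heavy lifting and the rest is a routine case analysis.
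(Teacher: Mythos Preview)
Your proof is correct and follows the paper's approach exactly: pick $D$ via \thref{thm:chi_D}, set $g=f\sqcap\chi_D$, and observe that $g$ always has a computable solution so that $f\not\weireducible\id$ forces $f\not\weireducible g$. Your closing case analysis on $\Phi(p)$ is just the paper's one-line join-irreducibility argument unpacked (one minor notational slip: under the paper's definition of $\sqcup$ the output is not re-tagged, so the solver response in your second case is $(1,\chi_D(n))$ rather than $(1,(1,\chi_D(n)))$, but this does not affect the argument).
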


\begin{proof}
	Fix $f$ as above with $\dom(f) = \{x\}$ and let $h$ be such that $f\not
	\weireducible h$. By \thref{thm:chi_D}, there is $D\subseteq \mathbb{N}$ such that $f\sqcap \chi_D\not\weireducible h$. Let $g:=f\sqcap \chi_D$. Note that $g \strictlyweireducible f$ because every instance of $\chi_D$ (and hence of $g$) has computable solutions, while our assumptions that $f \not\weireducible \id$ and $|\dom(f)|=1$ ensure that $f$ does not have computable solutions. 
	
	To prove the last part of the statement, observe that the reductions
	$h\weireducible h\sqcup g \weireducible  h \sqcup f$ are immediate as
	$\sqcup$ is the join in the Weihrauch lattice, and $h\sqcup g \not
	\weireducible h$ follows immediately from $g\not\weireducible h$. A reduction
	$h\sqcup f \weireducible h\sqcup g$ would, in particular, yield
	$f\weireducible h \sqcup g$. This is a contradiction as $f$ is
	join-irreducible (since $|\dom(f)|=1$) and $f\not\weireducible h$ and
	$f\not\weireducible g$. This concludes the proof.	
\end{proof}

We are now able to prove the first main theorem, which we state again for the sake of readability.

\begin{reptheorem}{thm:characterization_mc}
	Let $f,h$ be partial multi-valued functions on Baire space. The following are equivalent:
	\begin{enumerate}
		\item
		$f$ is a minimal cover of $h$ in the Weihrauch degrees,
		\item
		$f \weiequiv h \sqcup \id\restrict{\{p\}}$ for some $p$ with $\dom(h)
		\not\medvedevreducible \{p\}$ and $ \dom(h)\medvedevreducible
		\medvedevsucc{p}$.
	\end{enumerate}
\end{reptheorem}

\begin{proof}
    $(1)\Rightarrow (2)$: By \thref{thm:shuffling}, there is $g$ with
	$\dom(g)=\{p\}$ such that $f\weiequiv h\sqcup g$. In particular
	$g\not\weireducible h$. If $g\not\weireducible \id \restrict{\{p\}}$ then
	$g$ satisfies the hypotheses of \thref{thm:non_empty_interval}, hence there
	is $G$ such that $h\strictlyweireducible h \sqcup G \strictlyweireducible h
	\sqcup g \weiequiv f$, contradicting the fact that $f$ is a minimal cover
	of $h$. Therefore $g\weiequiv \id \restrict{\{p\}}$. The fact that
	$\dom(h)\not\medvedevreducible \{p\}$ is immediate as
	\[
	\dom(h)\medvedevreducible \{p\}\Rightarrow
	\id\restrict{\{p\}}\weireducible h \Rightarrow h\weiequiv h\sqcup \id
	\restrict{\{p\}} \weiequiv f,
	\]
	contradicting $h\strictlyweireducible f$. Observe that $\dom(h)\not\medvedevreducible \medvedevsucc{p}$ would lead to a
	contradiction with the fact that $f$ is a minimal cover of $h$. Indeed, we
	would obtain
	\[
	h\strictlyweireducible h\sqcup \id\restrict{\medvedevsucc{p}}
	\strictlyweireducible h \sqcup \id\restrict{\{p\}}\weiequiv f,
	\]
	where the first two reductions are strict, respectively, since
	$\dom(h)\not\medvedevreducible \medvedevsucc{p}$ and
	$\dom(h)\cup\medvedevsucc{p}\not\medvedevreducible \{p\}$ (as
	$\medvedevsucc{p}\not\medvedevreducible \{p\}$).

    \medskip
	$(2)\Rightarrow (1)$: Assume towards a contradiction that there is $g$ such
	that $h\strictlyweireducible g \strictlyweireducible h \sqcup
	\id\restrict{\{p\}}$. The forward functional $\Phi$ of the reduction $g \weireducible h \sqcup	\id\restrict{\{p\}}$ lets us define two restrictions $g_0, g_1$ of $g$ by letting $\dom(g_i):=\{ p\in\dom(g) \st \Phi(g)(0)=i\}$. In particular, we obtain $g\weiequiv g_0\sqcup g_1$, $g_0\weireducible h$ (and hence $\dom(h)\medvedevreducible \dom(g_0)$), and $g_1\weireducible \id\restrict{\{p\}}$. The latter reduction implies $g_1 \weiequiv \id\restrict{A}$ for some $A\geqM \{p\}$, hence we obtain $h \strictlyweireducible g_0 \sqcup \id
	\restrict{A}\strictlyweireducible h \sqcup \id\restrict{\{p\}} $.

    Note that $h\strictlyweireducible g_0 \sqcup
	\id\restrict{A}$ implies that $\id\restrict{A}\not \weireducible h$, i.e.,
	$\dom(h)\not\medvedevreducible A$. This, in turn, implies that $A
	\medvedevequiv \{p\}$, as $\{p\}\strictlymedvedevreducible A$ would mean
	that $A\geqM \medvedevsucc{p} \geqM \dom(h)$, contradicting
	$\dom(h)\not\medvedevreducible A$. In other words, we obtain $h
	\strictlyweireducible g_0\sqcup \id\restrict{\{ p\}} \strictlyweireducible
	h \sqcup \id\restrict{\{ p\}}$. This is a contradiction, as $h \weireducible
	g_0 \sqcup \id\restrict{\{p\}}$ implies that $h \sqcup \id\restrict{\{p\}}
	\weireducible g_0 \sqcup \id\restrict{\{p\}}$.
\end{proof}

Observe that, since $\emptyset$ is the top of the Medvedev lattice and the
bottom of the Weihrauch lattice, the following is immediate.

\begin{corollary}[\cite{DamirLuminy}]
	There are no minimal degrees in $\weidegrees$. 
\end{corollary}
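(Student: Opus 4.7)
My plan is to apply Theorem~\ref{thm:characterization_mc} directly to the bottom degree of the Weihrauch lattice. By definition, a \emph{minimal} Weihrauch degree is one that is a minimal cover of the bottom degree, and as noted in the introduction the bottom of $\weidegrees$ is represented by the empty function $h$, so $\dom(h) = \emptyset$. Thus it suffices to show that no $f$ can be a minimal cover of this $h$.

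Suppose toward a contradiction that such an $f$ exists. By Theorem~\ref{thm:characterization_mc}, there must be some $p \in \Baire$ with $\dom(h) \medvedevreducible \medvedevsucc{p}$, i.e., $\emptyset \medvedevreducible \medvedevsucc{p}$. However, $\emptyset$ is the top element of the Medvedev lattice: any computable functional witnessing $\emptyset \medvedevreducible B$ would have to be total on $B$ with image contained in the empty set, which is impossible unless $B$ itself is empty. Since $\medvedevsucc{p}$ is always nonempty---taking, for instance, $q = p'$ (the Turing jump of $p$) and any index $e$ with $\Phi_e(q) = p$ produces $\str{e}\concat q \in \medvedevsucc{p}$---we reach a contradiction.

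There is no real obstacle in this proof; all the content lies in Theorem~\ref{thm:characterization_mc}. Once one observes that the Medvedev-side condition $\dom(h) \medvedevreducible \medvedevsucc{p}$ forces $\dom(h)$ to lie strictly below the top of the Medvedev lattice, while $\dom(h) = \emptyset$ sits precisely at that top, the corollary follows at once.
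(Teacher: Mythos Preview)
Your proof is correct and follows essentially the same approach as the paper: both observe that a minimal degree would be a minimal cover of the empty function, and then apply \thref{thm:characterization_mc} together with the fact that $\emptyset$ is the top of the Medvedev lattice (while $\medvedevsucc{p}$ is not). The paper's proof is simply the one-line remark preceding the corollary; you have merely unpacked it, including the verification that $\medvedevsucc{p}\neq\emptyset$.
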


\thref{thm:characterization_mc} allows us to show also that there is a close
connection between antichains in the Turing degrees and minimal covers in the
Weihrauch degrees: for every family of pairwise Turing
incomparable sets $\{p_\alpha\}_{\alpha<\kappa}$ with $\kappa<2^{\aleph_0}$, there is a multi-valued function $h$ whose
minimal covers are exactly those of the form $h\sqcup \id\restrict{\{p_\alpha\}}$.

\begin{corollary}
	For every cardinal $\kappa \le 2^{\aleph_0}$, there is a problem $h$ with exactly $\kappa$
	minimal covers.
\end{corollary}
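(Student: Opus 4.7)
By \thref{thm:characterization_mc}, the minimal covers of a fixed problem $h$ are, up to Weihrauch equivalence, in bijection with the Turing degrees of reals $p$ for which both $\dom(h) \not\medvedevreducible \{p\}$ and $\dom(h) \medvedevreducible \medvedevsucc{p}$ hold. The plan is therefore to construct, for each $\kappa \le 2^{\aleph_0}$, a problem $h$ whose set of such degrees has cardinality exactly $\kappa$.

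If $\kappa = 0$, set $h \weiequiv \id$; by \thref{thm:dense_above_id} the pointed degrees are dense, so $\id$ has no minimal cover. If $\kappa \geq 1$, fix a family $\{p_\alpha\}_{\alpha < \kappa}$ of pairwise Turing-incomparable reals (such antichains exist in the Turing degrees for every $\kappa \le 2^{\aleph_0}$) and take $h := \id\restrict{D}$ where $D := \bigcup_{\alpha < \kappa} \medvedevsucc{p_\alpha}$. The inclusion $\medvedevsucc{p_\alpha} \subseteq D$ yields $D \medvedevreducible \medvedevsucc{p_\alpha}$ for each~$\alpha$. For the opposite direction, any $\str{e} \concat q \in D$ computable from $p_\alpha$ lies in some $\medvedevsucc{p_\beta}$ and would force $p_\beta \turingreducible q \turingreducible p_\alpha$, which is impossible: for $\alpha \neq \beta$ it contradicts incomparability, and for $\alpha = \beta$ it contradicts $q \not\turingreducible p_\beta$. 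Hence $D \not\medvedevreducible \{p_\alpha\}$, so each $p_\alpha$ produces a minimal cover $h \sqcup \id\restrict{\{p_\alpha\}}$. Since $\id\restrict{\{p\}}$ has singleton domain and is therefore join-irreducible, these minimal covers are pairwise Weihrauch-inequivalent, yielding at least $\kappa$ distinct ones.

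To see that no further minimal cover appears, let $p$ be Turing-inequivalent to every $p_\alpha$. If $p \strictlyturingabove p_\alpha$ for some $\alpha$, then $\str{e} \concat p \in \medvedevsucc{p_\alpha} \subseteq D$ for an appropriate index~$e$, so $D \medvedevreducible \{p\}$ and condition~(1) of \thref{thm:characterization_mc} fails. Otherwise $p$ is not Turing-above any $p_\alpha$, condition~(1) still holds by the same argument used for the $p_\alpha$'s themselves, and the task becomes to refute condition~(2) by exhibiting some $q \strictlyturingabove p$ that is not Turing-above any $p_\alpha$; for then no single functional can uniformly send the witness $q \in \medvedevsucc{p}$ into $D$.

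This last step is the main obstacle. For countable~$\kappa$ it is handled by a standard Kleene--Post style cone-avoidance construction producing a sufficiently generic $q$ above $p$ that avoids the countably many cones above the $p_\alpha$'s. For uncountable~$\kappa$ one must choose the antichain $\{p_\alpha\}$ carefully so that the upward Turing cone of every non-equivalent $p$ is not exhausted by the $p_\alpha$-cones; this can be arranged by a measure-theoretic or forcing argument adapted to the ambient cardinality, bounded above by $2^{\aleph_0}$.
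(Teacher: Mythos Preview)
Your construction and the lower-bound argument are essentially the same as the paper's (the paper uses $\dom(h)=\bigcup_\alpha\{q:p_\alpha\strictlyturingreducible q\}$, which is Medvedev-equivalent to your $\bigcup_\alpha\medvedevsucc{p_\alpha}$), and your reduction of the upper bound to finding $q\strictlyturingabove p$ with $p_\alpha\not\turingreducible q$ for all $\alpha$ is exactly right. The problem is your last paragraph, which is not a proof. For uncountable $\kappa<2^{\aleph_0}$ you gesture at ``a measure-theoretic or forcing argument adapted to the ambient cardinality'' and suggest that the antichain must be chosen ``carefully''; neither is needed, and you have not actually supplied an argument.

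The paper closes this gap with a short counting argument that works for an \emph{arbitrary} antichain. Every Turing degree $p$ has $2^{\aleph_0}$ minimal covers in the Turing degrees. If $m$ is a minimal cover of $p$ and $p_\alpha\turingreducible m$, then $p\strictlyturingreducible p\oplus p_\alpha\turingreducible m$ forces $m\turingequiv p\oplus p_\alpha$; hence each $p_\alpha$ lies below at most one minimal cover of $p$. Since $\kappa<2^{\aleph_0}$, some minimal cover $q$ of $p$ satisfies $p_\alpha\not\turingreducible q$ for all $\alpha$, and this $q$ witnesses $\dom(h)\not\medvedevreducible\medvedevsucc{p}$ as you wanted. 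Note also that the case $\kappa=2^{\aleph_0}$ requires no upper-bound work at all: by \thref{thm:characterization_mc} every problem has at most $2^{\aleph_0}$ minimal covers, so once you have at least that many you are done.
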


\begin{proof}
	The case for $\kappa=0$ follows from the fact that there are no minimal
	degrees in the Weihrauch lattice. It can also be proved using the fact that	the cone above $\id$ is dense (\thref{thm:dense_above_id}).
	
	Let $\{p_\alpha \in \Baire \st \alpha < \kappa\}$ be pairwise Turing
	incomparable and let $h$ be any problem with
	\[
	\dom(h):= \bigcup_{\alpha<\kappa} \{ q\in\Baire \st p_\alpha
	\strictlyturingreducible q\}.
	\]
	By \thref{thm:characterization_mc}, for every $\alpha<\kappa$, $h \sqcup \id\restrict{\{p_\alpha\}}$ is a minimal cover of $h$. Observe also that
	$h\sqcup \id \restrict{\{p_\alpha\}} \weireducible h \sqcup \id
	\restrict{\{p_\beta\}}$ implies  $p_\beta \turingreducible p_\alpha$ (as
	$\dom(h)\not\medvedevreducible \{p_\alpha\}$). This shows that $h$ has at
	least $\kappa$ minimal covers.
	
	\thref{thm:characterization_mc} implies that every $h$ has at most $2^{\aleph_0}$ minimal covers. To conclude the proof, assume that $k<2^{\aleph_0}$ and fix $p\in\Baire$ such that, for every $\alpha <	\kappa$, $p\not\turingequiv p_\alpha$. We show that $h\sqcup
	\id\restrict{\{p\}}$ is not a minimal cover of $h$. If there is
	$\alpha<\kappa$ such that $p_\alpha \turingreducible p$ then
	$\dom(h)\medvedevreducible \{p\}$ and hence $h\sqcup
	\id\restrict{\{p\}}\weireducible h$. If for every $\alpha<\kappa$, $p_\alpha \not \turingreducible p$, then there is some $q \strictlyturingabove p$ such that, for every $\alpha$, $p_\alpha \not \turingreducible q$. This follows from the fact that in the Turing degrees $p$ has $2^{\aleph_0}$  minimal covers  and for every $\alpha < \kappa$ there is at most one $m$ that is a minimal cover of $p$ such that $p_\alpha \turingreducible m$. Indeed, if $p_\alpha \turingreducible m$ and $m$ is a minimal cover of $p$ then $p\strictlyturingreducible p\oplus p_{\alpha}\turingreducible m$ implies $m\turingequiv p\oplus p_{\alpha}$. So $\kappa< 2^{\aleph_0}$ implies the existence of the desired $q$ as some minimal cover of $p$. 
It follows that $\dom(h)\not \medvedevreducible \medvedevsucc{p}$ and so  by \thref{thm:characterization_mc} we have that  $h\sqcup
	\id\restrict{\{p\}}$ is not a minimal cover of $h$.
\end{proof}

With a similar argument, we can also show the following:

\begin{corollary}
	For every $h$, if there is $p\in\Baire$ such that $\dom(h)\medvedevequiv
	\medvedevsucc{p}$, then $h$ has a unique minimal cover.
\end{corollary}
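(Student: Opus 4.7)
The plan is to apply \thref{thm:characterization_mc} to parametrize the minimal covers of~$h$ by a choice of $q\in\Baire$, use the hypothesis $\dom(h)\medvedevequiv \medvedevsucc{p}$ to pin down the Turing degree of~$q$, and then verify that all admissible choices yield the same Weihrauch degree.

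By \thref{thm:characterization_mc}, every minimal cover of~$h$ is Weihrauch-equivalent to $h\sqcup \id\restrict{\{q\}}$ for some~$q$ with (i) $\dom(h)\not\medvedevreducible \{q\}$ and (ii) $\dom(h)\medvedevreducible \medvedevsucc{q}$. Under $\dom(h)\medvedevequiv \medvedevsucc{p}$, condition (i) rewrites as $\medvedevsucc{p}\not\medvedevreducible \{q\}$, which I would observe to be equivalent to $q\not\strictlyturingabove p$: a reduction $\medvedevsucc{p}\medvedevreducible \{q\}$ amounts to~$q$ uniformly computing some $r\strictlyturingabove p$, and this is possible precisely when $q\strictlyturingabove p$. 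Condition (ii) becomes $\medvedevsucc{p}\medvedevreducible \medvedevsucc{q}$.

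The main technical step is to prove that $\medvedevsucc{p}\medvedevreducible \medvedevsucc{q}$ is equivalent to $p\turingreducible q$. The direction $(\Leftarrow)$ is immediate by composing $\Phi_e$ with a fixed reduction from~$q$ to~$p$; the condition $r\not\turingreducible p$ in the target follows from $r\not\turingreducible q$ and $p\turingreducible q$. For $(\Rightarrow)$, which I expect to be the main obstacle, I would invoke the relativized Spector minimal pair theorem to fix $r_1, r_2\strictlyturingabove q$ such that any $s\turingreducible r_1, r_2$ satisfies $s\turingreducible q$. Picking indices $e_i$ with $\Phi_{e_i}(r_i)=q$, the elements $\pairing{e_i}\concat r_i$ lie in $\medvedevsucc{q}$, and applying the witness of $\medvedevsucc{p}\medvedevreducible \medvedevsucc{q}$ produces $\pairing{e_i'}\concat r_i'\in \medvedevsucc{p}$ with $r_i'\turingreducible r_i$ and $p\turingreducible r_i'$. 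Hence $p\turingreducible r_1, r_2$, and so $p\turingreducible q$ by the minimal pair property.

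Combining (i) and (ii) forces $p\turingequiv q$. For any such~$q$, the reduction $\id\restrict{\{q\}}\weireducible h\sqcup \id\restrict{\{p\}}$ is witnessed by the forward map $q\mapsto (1,p)$ (using $p\turingreducible q$) and the backward map returning~$q$; the reverse reduction is symmetric. Since $q=p$ clearly satisfies (i) and (ii), $h\sqcup\id\restrict{\{p\}}$ is a minimal cover, and by the above every minimal cover of~$h$ is Weihrauch-equivalent to it.
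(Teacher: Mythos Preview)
Your proof is correct. The paper does not actually spell out a proof here, pointing only to a ``similar argument'' as in the preceding corollary; your argument follows exactly that template, reducing via \thref{thm:characterization_mc} to showing that conditions (i) and (ii) force $q\turingequiv p$, and then observing that $\id\restrict{\{q\}}\weiequiv\id\restrict{\{p\}}$ whenever $p\turingequiv q$.

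The one genuine difference is in how you establish the implication $\medvedevsucc{p}\medvedevreducible\medvedevsucc{q}\Rightarrow p\turingreducible q$. You take a minimal pair $r_1,r_2$ above~$q$ and note that any Medvedev reduction would force $p\turingreducible r_1,r_2$, hence $p\turingreducible q$. The argument the paper has in mind, by analogy with the preceding corollary, instead uses the abundance of Turing minimal covers: if $p\not\turingreducible q$, pick a single $r\strictlyturingabove q$ with $p\not\turingreducible r$ (some Turing minimal cover of~$q$ works, since at most one minimal cover of~$q$ can compute~$p$); this~$r$, suitably tagged, lies in $\medvedevsucc{q}$ but cannot be mapped computably into $\medvedevsucc{p}$. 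Both routes are of comparable difficulty; yours has the minor advantage of isolating the clean equivalence $\medvedevsucc{p}\medvedevreducible\medvedevsucc{q}\iff p\turingreducible q$. (A naming quibble: the relativized minimal-pair construction you invoke is usually credited to Kleene--Post or to the exact-pair method rather than to Spector, whose theorem concerns minimal \emph{degrees}.)
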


We finish with our characterization of strong minimal covers.

\begin{reptheorem}{thm:characterization_smc}
	Let $f,h$ be partial multi-valued functions on Baire space. The following are equivalent:
	\begin{enumerate}
		\item
		$f$ is a strong minimal cover of $h$ in the Weihrauch degrees,
		\item
		There is $p\in\Baire$ such that $f \weiequiv \id\restrict{\{p\}}$ and $h
		\weiequiv \id\restrict{\medvedevsucc{p}}$.
	\end{enumerate}
\end{reptheorem}

\begin{proof}
	To see that (2) implies (1), we just recall that $\{p\}$ is a strong minimal
	cover of $\medvedevsucc{p}$ in  $\medvedevdegrees^{\mathrm{op}}$, which is
	isomorphic to the lower cone of $\id$.
	
	We proceed to argue that (1) implies (2). As a strong minimal cover requires
	an empty interval, by \thref{thm:characterization_mc}, we can restrict
	ourselves to the case $f \weiequiv h \sqcup \id\restrict{\{p\}}$ where
	$\dom(h) \not\medvedevreducible \{p\}$ and $\dom(h) \medvedevreducible
	\medvedevsucc{p}$. As the top element of a strong minimal cover has to be
	join-irreducible, we find that $f\weireducible \id\restrict{\{p\}}$ (as
	$f\not\weireducible h$), and therefore $f\weiequiv \id\restrict{\{p\}}$.
	This, in turn, implies that $h \weiequiv \id\restrict{A}$ for some
	$A\subseteq \Baire$ such that $A\not\medvedevreducible \{ p\}$ and
	$A\medvedevreducible \medvedevsucc{p}$. Since $\{p\}\medvedevreducible A$ (as
	$h \weireducible f$), this yields $A \medvedevequiv \medvedevsucc{p}$ as
	claimed.
\end{proof}

In particular, this shows that every Weihrauch degree has a most one strong
minimal cover.

\bibliographystyle{mbibstyle}
\bibliography{bibliography}

\vspace*{-0.9cm}

\end{document}